\newtheorem{theorem}{Theorem}[section]
\newtheorem{defn}[theorem]{Definition}
\newtheorem{lemma}[theorem]{Lemma}
\newtheorem{eple}[theorem]{Example}
\newtheorem{rmk}[theorem]{Remarks}
\newtheorem{dsc}[theorem]{Discussion}
\newtheorem{nota}[theorem]{Notation}
\newsavebox{\indbin}
\savebox{\indbin}{\begin{picture}(0,0)
\newlength{\gnu}
\settowidth{\gnu}{$\smile$} \setlength{\unitlength}{.5\gnu}
\put(-1,-.65){$\smile$} \put(-.25,.1){$|$}
\end{picture}}
\newcommand{\be}{\begin{enumerate}}
\newcommand{\bd}{\begin{defn}}
\newcommand{\bt}{\begin{theorem}}
\newcommand{\bl}{\begin{lemma}}
\newcommand{\ee}{\end{enumerate}}
\newcommand{\ed}{\end{defn}}
\newcommand{\et}{\end{theorem}}
\newcommand{\el}{\end{lemma}}
\begin{document}
\title{Some Geometry of Nodal Curves}
\author{Tristram de Piro}
\address{55b, Via Ludovico Albertoni, Rome, 00152} \email{tristam.depiro@unicam.it}

\begin{abstract}
We find a geometrical method of analysing the singularities of a plane nodal curve. The main results will be used in a forthcoming paper on geometric Plucker formulas for such curves. Plane nodal curves, that is plane curves having at most nodes as singularities, form an important class of curves, as \emph{any} projective algebraic curve is birational to a plane nodal curve.
\end{abstract}

\maketitle

\begin{section}{An Analysis of the Nodes of an Algebraic Curve}

The purpose of this section is to develop the theory of plane nodal curves, using the Weierstrass preparation theorem. We use this theorem to analyse the nodes or ordinary double points of an algebraic curve. In this section, we will use the algebraic definition of a plane algebraic curve as defined by a single homogenous polynomial $F$ in the coordinates $\{X,Y,Z\}$ of $P^{2}$. Hence, such a curve may not be irreducible, or may be considered to have "non-reduced" factors. By a nodal curve, we mean any plane algebraic curve, having at most nodes as singularities, see the more precise statement below. It is extremely important to allow for non irreducible curves in the definition of a nodal curve. An example of such a curve is a union of $n$ lines in general position, that is no three of the lines intersect in a point.\\

We first restate a result from \cite{Newton}, in the special case of plane algebraic curves;\\

\begin{lemma}{Weierstrass Preparation for Plane Algebraic Curves}\\

Let $F(X,Y)$ be a polynomial in $L[X,Y]$ of the form;\\

$F(X,Y)=\sum a_{ij}X^{i}Y^{j}=0$\\

with $F(0,Y)\neq 0$ and $d=ord_{Y} F(0,Y)$. Then there exist unique elements $U(X,Y)$ and $G(X,Y)$ in $L[[X,Y]]$ such that;\\

$F(X,Y)=U(X,Y)G(X,Y)$\\

with $U(0,0)\neq 0$ and\\

$G(X,Y)=Y^{d}+c_{1}(X)Y^{d-1}+\ldots+c_{d}(X)$ $(*)$\\

with $c_{i}(X)\in L[[X]]$ and $c_{i}(0)=0$ for $1\leq i\leq d$.\\

\end{lemma}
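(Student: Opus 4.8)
The plan is to reduce the two-variable statement to a one-variable division problem and then solve it by successive approximation in the powers of $X$. View $L[[X,Y]]$ as $R[[Y]]$, where $R=L[[X]]$ is a complete local ring (indeed a discrete valuation ring) with maximal ideal $(X)$ and residue field $R/(X)\cong L$. Writing $F=\sum_{j}f_{j}(X)Y^{j}$ with $f_{j}\in R$, the hypotheses $F(0,Y)\neq 0$ and $d=\mathrm{ord}_{Y}F(0,Y)$ translate into $f_{j}(0)=0$ for $j<d$ and $f_{d}(0)\neq 0$; thus $f_{d}$ is a unit of $R$. A candidate factorisation $F=UG$ with $G$ of the form $(*)$ and $U(0,0)\neq 0$ is exactly a Weierstrass factorisation over the complete local ring $R$, so the statement is the specialisation of the general Weierstrass preparation theorem to $R=L[[X]]$; I would however carry out the argument directly in this case.

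First I would treat the base case $X=0$. Reducing modulo $(X)$ and using $c_{i}(0)=0$ forces $G(0,Y)=Y^{d}$, so the equation $F(0,Y)=U(0,Y)G(0,Y)$ determines $U(0,Y)=F(0,Y)/Y^{d}$, which lies in $L[[Y]]$ and has nonzero constant term $f_{d}(0)$; in particular $U(0,0)\neq 0$. This fixes the $X^{0}$-coefficients of all the $u_{k}$ together with $c_{i,0}=0$. I would then run an induction on $n$, assuming $U$ and $G$ have been determined modulo $X^{n}$ so that $F\equiv UG \pmod{X^{n}}$, and solve for the order-$n$ corrections. Writing the $X^{n}$-corrections as $A=\sum_{k}u_{k,n}Y^{k}\in L[[Y]]$ to $U$ and $B=\sum_{i=1}^{d}c_{i,n}Y^{d-i}\in L[Y]$ (so $\deg B<d$) to $G$, and letting $\Phi_{n}\in L[[Y]]$ denote the coefficient of $X^{n}$ in $F-UG$ computed from the data already known, the matching condition becomes the single equation $\Phi_{n}=A\,Y^{d}+U(0,Y)\,B$.

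The heart of the proof is to show that this equation has a unique solution $(A,B)$ with $A\in L[[Y]]$ and $\deg B<d$. Since $U(0,Y)$ is a unit of $L[[Y]]$, I would set $\Psi=U(0,Y)^{-1}\Phi_{n}$ and write $\Psi$ uniquely as $\Psi=A'Y^{d}+B$ with $B$ the truncation of $\Psi$ to degrees below $d$ and $A'=(\Psi-B)/Y^{d}\in L[[Y]]$; then $A=U(0,Y)A'$ recovers the correction to $U$. This is the one-variable Weierstrass division step, and its uniqueness yields uniqueness of the whole factorisation. The main obstacle I anticipate is purely organisational rather than deep: one must verify at every stage that the correction $B$ only adjusts $c_{1},\dots,c_{d}$ (keeping $G$ monic of degree exactly $d$) and that $c_{i,0}=0$ persists, so that the limiting series $c_{i}(X)=\sum_{n\ge 1}c_{i,n}X^{n}$ satisfy $c_{i}(0)=0$ and $U=\sum_{k}u_{k}(X)Y^{k}$ with $u_{0}(0)=f_{d}(0)\neq 0$ is a genuine unit of $L[[X,Y]]$. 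Collecting the inductively determined coefficients gives well-defined elements $U,G\in L[[X,Y]]$, and the uniqueness of each inductive step gives the asserted uniqueness.
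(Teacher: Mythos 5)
This lemma is stated in the paper without proof: it is explicitly a restatement of a result from \cite{Newton}, so there is no in-paper argument to compare yours against. Your proposal is a correct, self-contained proof, namely the standard successive-approximation proof of Weierstrass preparation over the complete local ring $L[[X]]$. The base case correctly pins down $U(0,Y)=F(0,Y)/Y^{d}$ as a unit of $L[[Y]]$, and the inductive step reduces to the unique solvability of $\Phi_{n}=AY^{d}+U(0,Y)B$ with $A\in L[[Y]]$ and $\deg_{Y}B<d$, which you solve correctly by truncating $U(0,Y)^{-1}\Phi_{n}$ at degree $d$; since each stage only determines the $X^{n}$-coefficients and never revises earlier ones, the limit exists in $L[[X,Y]]$ and uniqueness at each stage gives uniqueness of the factorisation. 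The one point worth stating explicitly is that the condition $c_{i}(0)=0$ is not an independent constraint but is forced: if $U(0,0)\neq 0$ then $\mathrm{ord}_{Y}G(0,Y)=\mathrm{ord}_{Y}F(0,Y)=d$, and a monic degree-$d$ polynomial of $Y$-order $d$ must equal $Y^{d}$ --- which is exactly how your base case proceeds.
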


We now give a characterisation of "nodes" for a plane algebraic curve (possibly not irreducible), which is a special case of this theorem. The following definition can be found in \cite{Ha};\\

\begin{defn}{"Node" or Ordinary Double Point of A Plane Algebraic Curve}\\

Let $F(X,Y)$ define a plane algebraic curve of degree $d$,with $F(0,0)=0$. We say that $(0,0)$ defines a "node" or ordinary double point of $F$, if $F=F_{2}+\ldots+F_{d}$, $F_{j}$ is a homogeneous polynomial of degree $j$ in $X$ and $Y$, for $2\leq j\leq d$, $F_{2}\neq 0$ and the linear factors of $F_{2}$ are \emph{distinct}.

\end{defn}

We then claim;\\

\begin{lemma}
Let $(0,0)$ define an ordinary double point of $F$, with the linear factors of $F_{2}$ given by $(aX+bY)$ and $(cX+dY)$. Then, if $l_{ab}$ defines the line $aX+bY=0$ and $l_{cd}$ defines the line $cX+dY=0$, we have that, for a line $l$ passing through $(0,0)$;\\

$I_{(0,0)}(F,l)=2$ iff $l$ is distinct from $l_{ab}$ and $l_{cd}$\\

Moreover, this condition characterises an ordinary double point. That is, if $C$ is a plane algebraic curve and $p\in C$ has the property that there exists exactly two distinct lines $\{l_{1},l_{2}\}$ passing through $p$ such that;\\

$I_{p}(C,l_{i})>2$ for $i\in\{1,2\}$\\

and\\

 $I_{p}(C,l)=2$\\

 for any other line $l$ passing through $p$, then $p$ defines an ordinary double point of $C$.

\end{lemma}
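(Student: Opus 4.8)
The plan is to reduce every intersection number $I_{(0,0)}(F,l)$ to an order of vanishing along a parametrised line, so that the entire statement becomes a question about the leading form $F_{2}$. A line $l$ through the origin can be written as $t\mapsto(\alpha t,\beta t)$ for a direction $(\alpha:\beta)\in P^{1}$, and I would invoke the standard fact that $I_{(0,0)}(F,l)$ equals the order in $t$ of the series $F(\alpha t,\beta t)=\sum_{j\geq 2}F_{j}(\alpha,\beta)\,t^{j}$. Since $F_{2}\neq 0$ and there are no terms of degree $<2$, this order is always at least $2$, and it equals $2$ exactly when $F_{2}(\alpha,\beta)\neq 0$.

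For the first assertion I would then substitute the given factorisation $F_{2}=(aX+bY)(cX+dY)$. The value $F_{2}(\alpha,\beta)=(a\alpha+b\beta)(c\alpha+d\beta)$ vanishes iff $(\alpha,\beta)$ lies on $l_{ab}$ or on $l_{cd}$, that is, iff $l\in\{l_{ab},l_{cd}\}$; for every other line the order is exactly $2$, while along $l_{ab}$ and $l_{cd}$ it jumps to at least $3$. Here the hypothesis that the linear factors of $F_{2}$ be \emph{distinct} is precisely what guarantees that $l_{ab}$ and $l_{cd}$ are two different lines, so that there are exactly two exceptional directions.

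For the converse I would run the same computation backwards. After translating $p$ to the origin, expand $C$ locally as $F=F_{m}+F_{m+1}+\cdots$ with $F_{m}\neq 0$, so that $m=\mbox{mult}_{p}(C)$ is the multiplicity. The parametrisation shows that a generic line has $I_{p}(C,l)=m$, and that $I_{p}(C,l)>m$ occurs only for the finitely many directions annihilating $F_{m}$. Since the hypothesis forces $I_{p}=2$ for all but two lines, the generic value is $2$, whence $m=2$ and $F_{2}\neq 0$; in particular $F_{0}=F_{1}=0$, so $F=F_{2}+F_{3}+\cdots$ as required. Over the algebraically closed field $L$ the binary quadratic $F_{2}$ splits into two linear factors, and the lines with $I_{p}>2$ are exactly the zero loci of these factors. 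Insisting on \emph{exactly two} such lines excludes a repeated factor (which would yield a single tangent line) and forces the two factors to be distinct, which is precisely the definition of an ordinary double point at $p$.

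The only genuinely delicate ingredient is the foundational identity $I_{(0,0)}(F,l)=\mbox{ord}_{t}\,F(\alpha t,\beta t)$ together with the attendant fact that the generic intersection number of $C$ with lines through $p$ equals $\mbox{mult}_{p}(C)$; once these are granted, the proof is just a matter of reading off the order of the leading form. I would either quote these as standard properties of the local intersection multiplicity, or, in keeping with the preceding lemma, derive them by using Weierstrass preparation to strip off a unit $U$ with $U(0,0)\neq 0$ and replace $F$ by its Weierstrass polynomial $G$, whose behaviour along the line records the desired order of contact.
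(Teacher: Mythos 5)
Your proposal is correct and follows essentially the same route as the paper: both reduce $I_{(0,0)}(F,l)$ to the order of vanishing of $F$ along a parametrisation of the line (the paper substitutes $X=gY$, you use $(\alpha t,\beta t)$), so that the answer is read off from whether the leading form $F_{2}$ (respectively $F_{m}$ in the converse) vanishes in that direction. Your treatment of the converse is in fact somewhat more explicit than the paper's, which merely asserts that one "deduces easily" that $F_{1}=0$ and that $F_{2}$ splits into the distinct factors determined by $l_{1}$ and $l_{2}$.
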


\begin{proof}

The proof is a straightforward algebraic calculation. For the first part of the lemma, suppose that $(0,0)$ defines an ordinary double point of $F$ and let $l$ be defined by $eX+fY=0$. Without loss of generality, assume that $e\neq 0$. We have to calculate;\\

$length({L[X,Y]\over <eX+fY,F(X,Y)>})=length({L[Y]\over <F(gY,Y)>})$, $(g={-f\over e})$\\

We have that;\\

 $F(gY,Y)=(ag+b)(cg+d)Y^{2}+O(Y^{3})$\\

Then, the result follows from the fact that $(ag+b)(cg+d)\neq 0$ iff $l$ is distinct from $l_{ab}$ and $l_{cd}$.\\

For the converse direction, let $F$ be the defining equation for $C$, and, without loss of generality, assume that $p$ corresponds to the origin $(0,0)$. By writing $F$ in the form $F=F_{1}+\ldots+F_{d}$, and using the same calculation as above, one deduces easily that $F_{1}=0$ and the polynomial $F_{2}$ splits into the distinct linear factors given by the equations of the lines $l_{1}$ and $l_{2}$.

\end{proof}

We apply this result to obtain;\\

\begin{lemma}
Let $(0,0)$ define an ordinary double point of $F$, such that the $Y$-axis is distinct from the tangent directions of the ordinary double point. Then, we can find $U(X,Y)$ and $G(X,Y)$, as in Lemma 1.1, such that $G$ has degree $2$ in $L((X))[Y]$.\\
\end{lemma}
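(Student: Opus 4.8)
The plan is to reduce everything to a single application of the Weierstrass preparation theorem (Lemma 1.1). Recall that Lemma 1.1 produces, from any $F$ with $F(0,Y)\neq 0$, a factorisation $F=UG$ in which $U(0,0)\neq 0$ and $G$ is monic of degree $d=ord_{Y}F(0,Y)$ in the variable $Y$, with power-series coefficients vanishing at $X=0$. Consequently, the entire content of the lemma is the assertion that, under the stated hypothesis on the $Y$-axis, we have $ord_{Y}F(0,Y)=2$; once this is known, Lemma 1.1 delivers a $G$ of the form $Y^{2}+c_{1}(X)Y+c_{2}(X)$ with $c_{i}\in L[[X]]\subseteq L((X))$, which is exactly a degree $2$ polynomial in $L((X))[Y]$.

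To compute this order I would first translate the geometric hypothesis into an algebraic one. Using the notation of Lemma 1.3, write $F_{2}=(aX+bY)(cX+dY)$, so that the two tangent directions of the node are the lines $l_{ab}\colon aX+bY=0$ and $l_{cd}\colon cX+dY=0$. The key elementary observation is that a linear form $aX+bY$ cuts out the $Y$-axis $\{X=0\}$ precisely when $b=0$ (and, for a genuine tangent line, $a\neq 0$); likewise $l_{cd}$ is the $Y$-axis precisely when $d=0$. Hence the hypothesis that the $Y$-axis is distinct from both tangent directions is equivalent to $b\neq 0$ and $d\neq 0$, i.e.\ $bd\neq 0$.

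It then remains to read off the order. Since $(0,0)$ is a node of $F$, by the Definition we have $F=F_{2}+F_{3}+\cdots+F_{m}$ with no constant or linear term, where $m=\deg F$ and each $F_{j}$ is homogeneous of degree $j$. Setting $X=0$ kills every monomial of $F_{j}$ except the pure $Y^{j}$ term, so $F_{j}(0,Y)$ is a scalar multiple of $Y^{j}$; in particular $F_{2}(0,Y)=(bY)(dY)=bd\,Y^{2}$. Therefore $F(0,Y)=bd\,Y^{2}+O(Y^{3})$, and since $bd\neq 0$ this series has order exactly $2$ in $Y$. This simultaneously verifies the hypothesis $F(0,Y)\neq 0$ needed to invoke Lemma 1.1 and shows $d=ord_{Y}F(0,Y)=2$, completing the argument.

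The proof is essentially computational and I do not expect a serious obstacle; the one point requiring care is the bookkeeping around the geometric-to-algebraic dictionary, namely confirming that \emph{the $Y$-axis is a tangent direction} corresponds exactly to the vanishing of the $Y$-coefficient in the relevant linear factor, so that the hypothesis genuinely forces the leading coefficient $bd$ of $F(0,Y)$ to be nonzero. A minor notational caution is to keep the degree $m$ of the curve distinct from the Weierstrass degree $d=2$, and to note that the $G$ furnished by Lemma 1.1 already lies in $L[[X]][Y]$, so a fortiori in $L((X))[Y]$, which is all that is claimed.
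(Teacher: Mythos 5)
Your proof is correct and follows essentially the same route as the paper: reduce the lemma to the claim that $ord_{Y}F(0,Y)=2$ and then invoke Lemma 1.1. The only cosmetic difference is that the paper obtains $ord_{Y}F(0,Y)=2$ by citing Lemma 1.3 (the intersection multiplicity of $F$ with the $Y$-axis at the origin is exactly this order), whereas you inline the underlying computation $F(0,Y)=bd\,Y^{2}+O(Y^{3})$ with $bd\neq 0$ directly from Definition 1.2.
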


\begin{proof}
By the assumption on the $Y$-axis and Lemma 1.3, we have that $ord_{Y}F(0,Y)=2$. Hence, the result follows immediately from Lemma 1.1.
\end{proof}

We then have;\\

\begin{lemma}
Let $G$ be given by the previous Lemma 1.4 and suppose that $char(L)\neq 2$, then we can find ${\eta_{1}(X),\eta_{2}(X)}$ in $L[[X]]$ such that;\\

$G(X,Y)=(Y-\eta_{1}(X))(Y-\eta_{2}(X))$ and $\eta_{1}'(0)\neq\eta_{2}'(0)$,\\ \indent \ \ \ \ \ \ \ \ \ \ \ \ \ \ \ \ \ \ \ \ \ \ \ \ \ \ \ \ \ \ \ \ \ \ \ \ \ \ \ \ \ \ \ \ \ \ \ \ \ \ \ $\eta_{1}(0)=\eta_{2}(0)=0$\\

as a formal identity in the ring $L[[X,Y]]$, where $\eta_{1}'(X),\eta_{2}'(X)$ denote the formal derivatives of $\eta_{1}(X)$ and $\eta_{2}(X)$ in $L[[X]]$, and the tangent directions of the node are given by $(Y-\eta_{1}'(0)X)$ and $(Y-\eta_{2}'(0)X)$.
\end{lemma}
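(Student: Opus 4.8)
The plan is to solve explicitly via the quadratic formula, which is legitimate precisely because $char(L)\neq 2$, and to show the only nontrivial point — that the discriminant of $G$ has a square root in $L[[X]]$ — follows from the distinctness of the tangent directions. By Lemma 1.4 we may write $G(X,Y)=Y^{2}+c_{1}(X)Y+c_{2}(X)$ with $c_{1},c_{2}\in L[[X]]$ and, from the Weierstrass normalisation $(*)$ of Lemma 1.1, $c_{1}(0)=c_{2}(0)=0$. First I would locate the lowest-order behaviour of the coefficients. Since $F=UG$ with $U(0,0)\neq 0$ and $ord$ is additive on $L[[X,Y]]$ (its associated graded ring is the domain $L[X,Y]$), $G$ has order $2$, forcing $ord_{X}\, c_{2}\geq 2$. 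Writing $c_{1}(X)=pX+\cdots$ and $c_{2}(X)=qX^{2}+\cdots$, the degree-$2$ part of $G$ is $Y^{2}+pXY+qX^{2}$, and the leading form satisfies $F_{2}=U(0,0)\,(Y^{2}+pXY+qX^{2})$. By Definition 1.2 and Lemma 1.3 the node has distinct tangent lines, so this binary form has distinct linear factors; equivalently its discriminant $\delta:=p^{2}-4q$ is nonzero.

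Next I would extract the square root. Setting $\Delta(X)=c_{1}(X)^{2}-4c_{2}(X)$, a direct order count gives $\Delta(X)=\delta X^{2}+(\text{order}\geq 3)$, so $\Delta$ has order exactly $2$ with nonzero leading coefficient $\delta$. Factor $\Delta(X)=\delta X^{2}\bigl(1+Xh(X)\bigr)$ with $h\in L[[X]]$. The unit $1+Xh(X)$ has constant term $1$, so the formal binomial series yields a square root $w(X)\in L[[X]]$ with $w(0)=1$; here $char(L)\neq 2$ is used to invert $2$ in the coefficients. Taking $\sqrt{\delta}\in L$ — using that $L$ is algebraically closed, equivalently that the tangent directions are $L$-rational — I obtain $\sqrt{\Delta}(X)=\sqrt{\delta}\,X\,w(X)\in L[[X]]$, of order exactly $1$ with leading coefficient $\sqrt{\delta}$.

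Now define $\eta_{1,2}(X)=\tfrac{1}{2}\bigl(-c_{1}(X)\pm\sqrt{\Delta}(X)\bigr)\in L[[X]]$. By the quadratic formula $G=(Y-\eta_{1})(Y-\eta_{2})$ as a formal identity. Since $c_{1}(0)=0$ and $\sqrt{\Delta}(0)=0$ we get $\eta_{1}(0)=\eta_{2}(0)=0$, and comparing linear coefficients gives $\eta_{1}'(0)-\eta_{2}'(0)=\sqrt{\delta}\neq 0$, so $\eta_{1}'(0)\neq\eta_{2}'(0)$. Finally the Vieta relations give $\eta_{1}'(0)+\eta_{2}'(0)=-p$ and $\eta_{1}'(0)\eta_{2}'(0)=q$, hence $(Y-\eta_{1}'(0)X)(Y-\eta_{2}'(0)X)=Y^{2}+pXY+qX^{2}$, which is the degree-$2$ part of $G$ and matches $F_{2}$ up to the unit $U(0,0)$; thus the tangent directions are $(Y-\eta_{1}'(0)X)$ and $(Y-\eta_{2}'(0)X)$, as claimed.

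The main obstacle is the existence of $\sqrt{\Delta}$ in $L[[X]]$, and it rests on exactly the two hypotheses in play. The order of $\Delta$ is even (indeed $2$) \emph{iff} $\delta\neq 0$, which is equivalent to distinct tangents: were the tangents to coincide, $\Delta$ would have order $\geq 3$ and the square root could fail to lie in $L[[X]]$. The assumption $char(L)\neq 2$ is essential both for the quadratic formula and for the binomial series defining $w(X)$. It is worth noting that a naive Hensel's lemma argument does not apply here, since $G\equiv Y^{2}\pmod{X}$ is a non-separable reduction; the two roots separate only at first order, which is precisely the content of $\delta\neq 0$.
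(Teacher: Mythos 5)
Your proof is correct, but it takes a genuinely different and more direct route than the paper. The paper splits into cases according to whether $G$ is reducible in $L((X))[Y]$: in the reducible case it checks integrality of the roots by substitution into the Weierstrass polynomial and then invokes the intersection-number characterisation of Lemma 1.3 to get $\eta_{1}'(0)\neq\eta_{2}'(0)$ and the tangent directions; in the irreducible case it derives a contradiction by passing to $G_{1}(X,Y)=G(X^{2},Y)$, showing $Disc_{X}(G_{1})$ is always a square, factoring over $L[[X^{1/2}]]$, and concluding that the point would then have a single tangent direction. You bypass the case split entirely by computing that $Disc_{X}(G)=c_{1}^{2}-4c_{2}$ has order exactly $2$ with leading coefficient $\delta=p^{2}-4q$, which is precisely the discriminant of the tangent cone $F_{2}/U(0,0)$ and hence nonzero by the node hypothesis; the square root then exists in $L[[X]]$ and the quadratic formula does the rest. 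This is cleaner, makes visible exactly where the distinct-tangents hypothesis enters (it is what forces $ord_{X}\,Disc_{X}(G)$ to be even, the same parity condition $(\dag)$ that the paper manipulates abstractly), and your Vieta identification of $(Y-\eta_{1}'(0)X)(Y-\eta_{2}'(0)X)$ with $F_{2}$ up to the unit $U(0,0)$ gives the tangent-direction claim without re-running the intersection computation of Lemma 1.3. Your closing remark that naive Hensel lifting fails because $G\equiv Y^{2}\pmod{X}$ is also accurate and worth keeping. The only point to make explicit is your reliance on $\sqrt{\delta}\in L$: this holds because $L$ is taken algebraically closed (equivalently, because Definition 1.2 factors $F_{2}$ into linear forms over $L$, so $\delta=(\alpha-\beta)^{2}$ is already a square); the paper assumes this implicitly, so it is a hypothesis you are using, not a gap.
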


\begin{proof}
We can write $G(X,Y)$ in the form;\\

$Y^{2}+c_{1}(X)Y+c_{2}(X)$ $(*)$\\

with $c_{i}(X)\in L[[X]]$ and $c_{i}(0)=0$ for $1\leq i\leq 2$, $(**)$. Suppose first that $G$ is reducible in $L((X))[Y]$. Then we have that;\\

$G(X,Y)=(Y-\eta_{1}(X))(Y-\eta_{2}(X))$\\

with $\{\eta_{1}(X),\eta_{2}(X)\}$ in $L((X))$. Substituting $\eta_{1}(X)$ in $(*)$, and using $(**)$, it follows immediately that $\eta_{1}(X)\in L[[X]]$
and $\eta_{1}(0)=0$. The same argument holds for $\eta_{2}(X)$ as well. Now, let $l_{ef}$ denote the line $eX+fY=0$, we may assume that $f\neq 0$ by the assumptions of Lemma 1.4. By a straightforward algebraic calculation, as above, we have that;\\

$I_{(0,0)}(F,l_{ef})=ord_{X}F(X,{-e\over f}X)$\\

where $ord_{X}$ may be taken either in $L[X]$ or $L[[X]]$. Now, using the fact that $U(X,Y)$, from Lemma 1.1, is a unit in $L[[X,Y]]$, we have that
$ord_{X}U(X,{-e\over f}X)=0$. By the elementary property of $ord_{X}$, that $ord_{X}(gh)=ord_{X}(g)+ord_{X}(h)$, for $g,h$ power series in $L[[X]]$, we must then have have;\\

$ord_{X}F(X,{-e\over f}X)=ord_{X}G(X,{-e\over f}X)$\\

\indent \ \ \ \ \ \ \ \ \ \ \ \ \ \ \ \ \ \ \ \ \ \ $=ord_{X}(eX+f\eta_{1}(X))(eX+f\eta_{2}(X))$ $(***)$\\

 It then follows from $(***)$, that;\\

$I_{(0,0)}(F,l_{ef})=2$ iff ${-e\over f}$ is distinct from $\{\eta_{1}'(0),\eta_{2}'(0)\}$.\\

By Lemma 1.3, the Definition 1.2 of an ordinary double point and the assumption in Lemma 1.4 on the tangent directions of the ordinary double point, we must then have that $\eta_{1}'(0)\neq \eta_{2}'(0)\neq 0$ and that the tangent directions are given by $(Y-\eta_{1}'(0)X)$ and $(Y-\eta_{2}'(0)X)$, as required. Now, suppose that $G$ is irreducible in $L((X))[Y]$, $(****)$, we will argue for a contradiction. Using the method of completing the square, which is valid with the assumption that $char(L)\neq 2$, $(****)$ can only occur if;\\

$Disc_{X}(G)=c_{1}(X)^{2}-4c_{2}(X)$ is \emph{not} a square in $L[[X]]$, $(\dag)$\\

Let $c_{1}(X)=X^{m}U(X)$ and $c_{2}(X)=X^{n}V(X)$, with $U(X)$ and $V(X)$ units in $L[[X]]$. Then, a straightforward algebraic calculation shows that;\\

 $(\dag)$ holds iff either $n<2m$ and $n$ is odd\\
  \indent \ \ \ \ \ \ \ \ \ \ \ \ \ \ \ \ \ \ \ or $n=2m$ and $U(X)-4V(X)$ is \emph{not} a square in $L[[X]]$\\

Let $G_{1}(X,Y)=G(X^{2},Y)=Y^{2}+c_{1}(X^{2})Y+c_{2}(X^{2})$.\\

We now claim that the discriminant;\\

$Disc_{X}(G_{1})=c_{1}(X^{2})^{2}-4c_{2}(X^{2})$ \emph{is} a square in $L[[X]]$, $(\dag\dag)$\\

In order to see this, first observe that $c_{1}(X^{2})=X^{2m}U(X^{2})$ and $c_{2}(X^{2})=X^{2n}V(X^{2})$, with $U(X^{2})$ and $V(X^{2})$ units in $L[[X]]$. Now, by the fact that $2n$ is even and $U(X^{2})-4V(X^{2})$ is always a square in $L[[X]]$, we see that the condition $(\dag)$ \emph{cannot} hold, hence $(\dag\dag)$ holds as required. Now, if $n<2m$ and $n$ is odd, we have that $2n<4m$ and;\\

$Disc_{X}(G_{1})=X^{2n}W(X)$ $(1)$\\

where $W(X)$ is the unit in $L[[X]]$ given by $X^{4m-2n}U(X^{2})^{2}-4V(X^{2})$. If $n=2m$ and $U(X)-4V(X)$ is not a square in $L[[X]]$, then $2n=4m$ and $ord_{X}(U(X)-4V(X))=r$ with $r$ odd. We then have that $ord_{X}(U(X^{2})-4V(X^{2})=2r$ and;\\

$Disc_{X}(G_{1})=X^{2n+2r}T(X)$ $(2)$\\

where $U(X^{2})-4V(X^{2})=X^{2r}T(X)$ and $T(X)$ is a unit in $L[[X]]$. In case $(1)$, we have that $n$ is odd, while in case $(2)$, we have that $n+r$ is odd. It follows that we can always find $s$ odd, a unit $R(X)$ in $L[[X]]$ and $Z(X)=X^{s}R(X)$ such that;\\

$Disc_{X}(G_{1})=Z(X)^{2}$\\

Let;\\

$\eta(X)={-c_{1}(X^{2})\over 2}+{Z(X)\over 2}$\\

By the fact that $R(X)=R(-X)$, we have $Z(X)=-Z(-X)$, hence, from the method of completing the square;\\

$G_{1}(X,Y)=G(X^{2},Y)=(Y-\eta(X))(Y-\eta(-X))$ $(\dag\dag\dag)$\\

It follows, from the construction of $\eta(X)$, that $\eta(0)=0$. Now, by making the formal substitution of $X^{1/2}$ for $X$ in $(\dag\dag\dag)$, we obtain that;\\

$G(X,Y)=(Y-\eta(X^{1/2}))(Y-\eta(-X^{1/2}))$ and $\eta(0)=0$, $(\dag\dag\dag\dag)$\\

as a formal identity in the ring $L[[X^{1/2},Y]]$. Now, by the fact that $\eta(0)=0$, we have;\\

$\eta(X)=a_{1}X+a_{2}X^{2}+a_{3}X^{3}+O(X^{4})$, $(\dag\dag\dag\dag\dag)$\\

Let $l$ be the line given by $Y-\lambda X=0$, for $\lambda\in L$. By a similar argument to the above, and using $(\dag\dag\dag\dag)$, we have that;\\

$I_{0,0}(F,l)=ord_{X}G(X,\lambda X)=ord_{X}(\lambda X-\eta(X^{1/2}))(\lambda X-\eta(-X^{1/2}))$\\

We have that;\\

$(\lambda X-\eta(X^{1/2}))(\lambda X-\eta(-X^{1/2}))=\lambda^{2}X^{2}-\lambda X[\eta(X^{1/2})+\eta(-X^{1/2})]+$\\
\indent \ \ \ \ \ \ \ \ \ \ \ \ \ \ \ \ \ \ \ \ \ \ \ \ \ \ \ \ \ \ \ \ \ \ \ \ \ \ \ \ \ \ \ \ \ \ $\eta(X^{1/2})\eta(-X^{1/2})$\\
\indent \ \ \ \ \ \ \ \ \ \ \ \ \ \ \ \ \ \ \ \ \ \ \ \ \ \ \ \ \ \ \ \ \ \ \ \ \ \ \ \ \ \ \ $=\lambda^{2}X^{2}-2\lambda X[a_{2}X+O(X^{2})]+$\\
\indent \ \ \ \ \ \ \ \ \ \ \ \ \ \ \ \ \ \ \ \ \ \ \ \ \ \ \ \ \ \ \ \ \ \ \ \ \ \ \ \ \ \ \ \ \ \ $[-a_{1}^{2}X+(a_{2}^{2}-2a_{1}a_{3})X^{2}+O(X^{3})]$\\

As $I_{0,0}(F,l)\geq 2$, we must have that $a_{1}=0$, and then;\\

$(\lambda X-\eta(X^{1/2}))(\lambda X-\eta(-X^{1/2}))=(\lambda-a_{2})^{2}X^{2}+O(X^{3})$\\

It then follows that $I_{0,0}(F,l)=2$ iff $\lambda\neq a_{2}$. In particular, this implies that there can only be \emph{one} tangent direction to the ordinary double point of $F$, given by $Y-a_{2}X=0$, which is a contradiction. This implies that $(****)$ cannot hold, hence the lemma is proved.

\end{proof}

\begin{rmk}
In Definition 6.3 of the paper \cite{Piero}, we defined a node of a plane algebraic curve to be the origin of two ordinary branches with distinct tangent directions, see that paper for relevant terminology, in particular, by a plane algebraic curve, we meant an \emph{irreducible} closed subvariety of $P^{2}$, having dimension $1$. This is slightly different from the definition that we have used here. For future reference and to avoid ambiguity, we will refer to Definition 1.2 as referring to an ordinary double point, reserving the terminology node for its use in \cite{Piero}. It follows immediately from the definition, that a node $p$ of a plane algebraic curve $C$ (irreducible) has the following property;\\

That there exist exactly $2$ distinct lines ${l_{1},l_{2}}$ passing through $p$ such that;\\

$I_{p}(C,l_{i})=3$, for $i\in \{1,2\}$\\

and, for any other line $l$, we have that;\\

$I_{p}(C,l)=2$ $(*)$\\

By Lemma 1.3, it follows that a node is an ordinary double point. However, the converse need not be true. In the sense of Definition 6.3 in \cite{Piero}, we can instead give the following geometric definition of an ordinary double point, for an irreducible plane algebraic curve $C$, as the origin of two \emph{linear} branches with distinct tangent directions, $(\dag)$. In the case of an irreducible plane algebraic curve $C$, this definition is equivalent to Definition 1.2, $(**)$. For, suppose that $p$ defines an ordinary double point in the sense of $(\dag)$, then it follows, see \cite{Piero};\\

That there exist exactly $2$ distinct lines ${l_{1},l_{2}}$ passing through $p$ such that;\\

$I_{p}(C,l_{i})>2$, for $i\in \{1,2\}$\\

and, for any other line $l$, we have that;\\

$I_{p}(C,l)=2$\\

By the same reasoning as above, Lemma 1.3, it follows that this implies $p$ corresponds to an ordinary double point in the sense of Definition 1.2. Conversely, suppose that $p$ does \emph{not} define an ordinary double point in the sense of $(\dag)$, then, we have one of the following cases;\\

Case 1. There is a single branch centred at $p$.\\

Case 2. There are at least three branches centred at $p$.\\

Case 3. There are two branches centred at $p$ and at least one of them\\
\indent \ \ \ \ \ \ \ \ \ \ \ is non-linear.\\

Case 4. There are two linear branches centred at $p$ with the same\\
\indent \ \ \ \ \ \ \ \ \ \ \ tangent directions.\\

Using Theorem 5.13 of \cite{Piero}, the property of tangent lines given in Theorem 6.2 of \cite{Piero} and the general result of the paper \cite{Newton}, Lemma 4.16, that Zariski multiplicity coincides with algebraic multiplicity, we have that, in Cases 1 and 4, there is a \emph{single} line $l_{1}$ with the property that $I_{p}(C,l_{1})>I_{p}(C,l)$ for any other line $l$ passing through $p$ while in Cases 2 and 3, we have that $I_{p}(C,l)\geq 3$, for \emph{any} line $l$ passing through $p$. By Lemma 1.3 again, it follows that, in all these cases, $p$ \emph{cannot} define an ordinary double point of $C$. Hence, $(**)$ is shown.\\

As an intuitive example, the figure "8", centred at the origin, may be considered to have an ordinary double point which is not a node. The reason being that the two branches centred at the origin are both inflexions, hence are linear but not ordinary.

\end{rmk}

We now extend the result of Theorem 2.10 in \cite{divisors} to the case of ordinary double points. We assume that $char(L)\neq 2$.

\begin{theorem}
Let $F(X,Y)=0$ define an irreducible plane algebraic curve $C$, with an ordinary double point at $(0,0)$. Let $(T,\eta_{1}(T))$ and $(T,\eta_{2}(T))$ be the power series representations of this point, as given by Lemma 1.5, and let $\{\gamma_{1},\gamma_{2}\}$ be the branches centred at $(0,0)$, see \cite{Piero}. Then, for any plane, possibly not irreducible, algebraic curve $H(X,Y)$ passing through $(0,0)$;\\

$H(T,\eta_{j}(T))\equiv 0$, for $j=1$ or $j=2$ iff $H$ contains $C$ as a\\
\indent \ \ \ \ \ \ \ \ \ \ \ \ \ \ \ \ \ \ \ \ \ \ \ \ \ \ \ \ \ \ \ \ \ \ \ \ \ \ \ \ \ \ \ \ \ \ \ \  component.\\

$ord_{T}H(T,\eta_{j}(T))=I_{\gamma_{j}}(C,H)$ otherwise, $j\in \{1,2\}$\\

where $I_{\gamma_{j}}$ denotes the branched intersection multiplicity, as defined in \cite{Piero} and \cite{divisors}.

\end{theorem}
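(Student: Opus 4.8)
The plan is to push everything back to the local factorisation provided by Lemma 1.5 and then to treat the two branches one at a time, invoking Theorem 2.10 of \cite{divisors}. By Lemma 1.1 and Lemma 1.5 we have, in $L[[X,Y]]$, the factorisation $F(X,Y)=U(X,Y)(Y-\eta_{1}(X))(Y-\eta_{2}(X))$ with $U$ a unit and $\eta_{1},\eta_{2}\in L[[X]]$, $\eta_{1}(0)=\eta_{2}(0)=0$, $\eta_{1}'(0)\neq\eta_{2}'(0)$. Since $U$ is a unit, the irreducible factors of $F$ in $L[[X,Y]]$ are exactly the two factors $(Y-\eta_{j}(X))$, each of which is irreducible (being linear in $Y$) and smooth; I would identify these with the geometric branches $\gamma_{1},\gamma_{2}$ of \cite{Piero}, and record that $X=T,\ Y=\eta_{j}(T)$ is a parametrisation of $\gamma_{j}$ in which $T$ is a uniformising parameter. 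I would also note that $F(T,\eta_{j}(T))=0$, since substituting $Y=\eta_{j}(T)$ annihilates the $j$-th factor.

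The easy direction of the first equivalence is immediate: if $H$ contains $C$ as a component then $F\mid H$ in $L[X,Y]$, say $H=FK$, and evaluating along the branch gives $H(T,\eta_{j}(T))=F(T,\eta_{j}(T))K(T,\eta_{j}(T))=0$ for both $j$.

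For the reverse direction, which I expect to be the crux, suppose $H(T,\eta_{j}(T))\equiv 0$ for some $j$. Viewing $H$ in $L[[X]][Y]$ and applying the polynomial remainder theorem with root $\eta_{j}(X)\in L[[X]]$, this forces $(Y-\eta_{j}(X))\mid H$ in $L[[X,Y]]$. The task is then to promote this formal divisibility to the genuine polynomial divisibility $F\mid H$. Here the global irreducibility of $C$ is essential: the point is that containing even one of the two analytic branches at the node must force $H$ to contain all of $C$. Concretely, since $F$ is irreducible it suffices to exclude $\gcd(F,H)=1$; if $F$ and $H$ were coprime in the UFD $L[X,Y]$, then (as $F$ is irreducible of positive degree in $Y$, hence primitive and irreducible in $L(X)[Y]$ by Gauss's lemma) they would be coprime in the PID $L(X)[Y]$, giving $AF+BH=d(X)$ for some nonzero $d(X)\in L[X]$ and $A,B\in L[X,Y]$. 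But $(Y-\eta_{j}(X))$ divides both $F$ and $H$ in $L[[X,Y]]$, hence divides $d(X)$; substituting $Y=\eta_{j}(X)$ yields $d(X)=0$, a contradiction. Thus $F\mid H$ and $H$ contains $C$. This elimination step — bridging the formal power-series divisibility and the global polynomial divisibility — is the main obstacle, and is exactly the content of Theorem 2.10 of \cite{divisors} applied to the branch $\gamma_{j}$ of the irreducible curve $C$.

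Finally, for the second assertion assume $H$ does not contain $C$; by the first part $H(T,\eta_{j}(T))\not\equiv 0$, so $ord_{T}H(T,\eta_{j}(T))$ is finite. Since $(T,\eta_{j}(T))$ parametrises $\gamma_{j}$ with $T$ a uniformiser, the branched intersection multiplicity $I_{\gamma_{j}}(C,H)$ is, by its definition in \cite{Piero} and \cite{divisors}, the order of vanishing of the pullback of $H$ along $\gamma_{j}$, namely $ord_{T}H(T,\eta_{j}(T))$; this yields the stated equality, applying Theorem 2.10 of \cite{divisors} to each branch separately. The only remaining point to verify is that the representation furnished by Lemma 1.5 is primitive — which it is, since $X=T$ is linear, so that $T$ is a genuine uniformising parameter and no reparametrisation correction is needed.
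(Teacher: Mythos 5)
Your handling of the first equivalence is correct, and it takes a genuinely different and more elementary route than the paper: where the paper passes to an etale extension on which $Y-\eta_{1}(X)$ cuts out an honest curve contained in $C$ and then invokes the argument of Theorem 2.10 of \cite{divisors}, you promote the formal divisibility to polynomial divisibility by a resultant argument ($AF+BH=d(X)$ in $L(X)[Y]$ via Gauss's lemma, killed by substituting $Y=\eta_{j}(X)$ since both $F$ and $H$ vanish there). That is clean, self-contained, and buys independence from the etale machinery for this half of the statement.

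The second assertion, however, is not proved. You write that $I_{\gamma_{j}}(C,H)$ is ``by its definition'' the order of vanishing of $H$ along the parametrisation $(T,\eta_{j}(T))$. It is not: in \cite{Piero} the branched multiplicity is defined geometrically, as the number of points of $C^{ns}\cap\overline{H_{\bar v'}}$ lying in the infinitesimal neighbourhood ${\mathcal V}_{p_{j}}$ of the point $p_{j}$ of the non-singular model corresponding to $\gamma_{j}$, for $\bar v'$ generic near $\bar v_{0}$ in a linear system containing $H$ --- that is, as a Zariski multiplicity of a cover $F_{3}\rightarrow Par_{\Sigma}$. The identity $ord_{T}H(T,\eta_{j}(T))=I_{\gamma_{j}}(C,H)$ is precisely the content of the theorem, so your appeal to the definition begs the question, and your appeal to Theorem 2.10 of \cite{divisors} does not close the gap, since that theorem does not apply at a singular point of $C$; extending it past the node is the whole point here. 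Concretely, what is missing is: (i) the verification that the cover cut out by $Y-\eta_{j}(X)$ and $H$ over $Par_{\Sigma}$ has Zariski multiplicity $d_{j}=ord_{X}H(X,\eta_{j}(X))$ at the lifted origin, which rests on the local ring of $C$ at $(0,0)$ on an etale cover being isomorphic to that of the reducible model $(Y-\eta_{1}(X))(Y-\eta_{2}(X))$ (the unit $U$ being invertible locally); and (ii) the proof that the two formal factors map to the two \emph{distinct} points $p_{1},p_{2}$ of $C^{ns}$ over the node, and do so via maps that are etale there (Claims 1--3 of the paper), so that this multiplicity agrees with the one computed on $C^{ns}$ at $p_{j}$. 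Step (ii) is also what justifies the index-matching between $\{\eta_{1},\eta_{2}\}$ and $\{\gamma_{1},\gamma_{2}\}$, which you take for granted when you ``identify'' the formal factors with the geometric branches. Without (i) and (ii) the second half of the theorem is unestablished.
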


\begin{proof}

The proof relies both on the method of Theorem 2.10 in \cite{divisors} and the methods of the paper \cite{Piero}. First, observe that we have $F(T,\eta_{j}(T))\equiv 0$ for $j\in \{1,2\}$, $(*)$. This follows immediately from Lemmas 1.4 and 1.5. If $H$ contains $C$ as a component, then, by the same argument as Theorem 2.10 in \cite{divisors}, using the Nullstellenstatz, we would have that $H(T,\eta_{j}(T))\equiv 0$, for $j=1$ \emph{and} $j=2$. For the converse direction, suppose that $H(T,\eta_{1}(T))\equiv 0$. By $(*)$, we have that $\eta_{1}(X)$ is an algebraic power series. Hence, we can interpret the equation $Y-\eta_{1}(X)$ as defining a curve $C_{1}$ on some etale extension $i:(A^{2}_{et},(00)^{lift})\rightarrow (A^{2},(00))$ such that $i(C_{1})\subset C$. As in Theorem 2.10 of \cite{divisors}, we can then argue to obtain that $H$ vanishes on $C$. The same argument holds if $H(T,\eta_{2}(T))\equiv 0$. For the second part of the theorem, we may therefore assume that $H$ has finite intersection with $C$ and $ord_{T}H(T,\eta_{j}(T))$ is finite for $j\in \{1,2\}$. Now, suppose that $deg(H)=e$ and let $\Sigma$ be a maximal linear system consisting of curves of degree $e$, having finite intersection with $C$. As in Theorem 2.10 of \cite{divisors}, we can write $H(X,Y)$ in the form $H(X,Y,\bar v_{0})$, where $\bar v\in Par_{\Sigma}$ and $F(X,Y)$ in the form $F(X,Y,\bar u_{0})$, for some non-varying constant $\bar u_{0}$. Similarly to Theorem 2.10 of \cite{divisors}, and using Lemma 1.5, we then have the sequence of maps;\\

$L[\bar v]\rightarrow {L[X,Y][\bar v]\over <F(X,Y,\bar u_{0}),H(X,Y,\bar v)>}\rightarrow {L[X]^{ext}[Y][\bar v]\over <(Y-\eta_{1}(X))(Y-\eta_{2}(X)),H(X,Y,\bar v)>}$\\

which corresponds to a sequence of finite covers;\\

$F_{1}\rightarrow F'\rightarrow Par_{\Sigma}$ $(1)$\\

We claim that the left hand morphism is etale at $(\bar v_{0},(00)^{lift})$, $(\dag)$. In order to see this, observe that the local rings ${L[X]^{ext}[Y]\over <F(X,Y,\bar u_{0})>}_{(00)}$ and ${L[X]^{ext}[Y]\over <(Y-\eta_{1}(X))(Y-\eta_{2}(X))>}_{(00)}$ are isomorphic, using the factorisations of Lemma 1.5, Lemma 1.1, and the invertibility of the unit $U$, obtained in Lemma 1.1, in the first local ring. It then follows that the completions of these local rings must be isomorphic as well. The claim $(\dag)$ then follows by the criteria for etale morphisms given in \cite{Mum}, (Theorem 3, p179). Intuitively, the power series factors $(Y-\eta_{1}(X))$ and $(Y-\eta_{2}(X))$ of $F(X,Y)$ together "preserve" the shape of the node at $(0,0)$.\\

We also have the maps;\\

${L[X]^{ext}[Y][\bar v]\over <(Y-\eta_{1}(X))(Y-\eta_{2}(X)),H(X,Y,\bar v)>}\rightarrow {L[X]^{ext}[Y][\bar v]\over <Y-\eta_{j}(X),H(X,Y,\bar v)>}$ ($j=1$ or $j=2$)\\

which correspond to inclusions;\\

$i_{2,j}:F_{2,j}\rightarrow F_{1}$, ($j=1$ or $j=2$) (2)\\

We will be interested in the covers $F_{2,j}\rightarrow Par_{\Sigma}$, obtained by combining $(1)$ and $(2)$. Let $d_{j}=ord_{X}H(X,\eta_{j}(X),\bar v_{0})$. We claim that the Zariski multiplicity of the cover $F_{2,j}\rightarrow Par_{\Sigma}$ at $(\bar v_{0},(00)^{lift})$ is $d_{j}$ as well, $(\dag\dag)$. This follows by imitating the corresponding proof in Theorem 2.10 of \cite{divisors}.\\

We now fix a non-singular model $C^{ns}\subset P^{w}$ of $C$, with birational presentation $\Phi_{\Sigma_{1}}:C^{ns}\rightarrow C$. Let $U_{\Phi_{\Sigma_{1}}}\subset C$ and $V_{\Phi_{\Sigma_{1}}}\subset C^{ns}$ be the canonical sets associated to this presentation, see \cite{Piero}. Corresponding to the family of forms $\{H_{\bar v}:\bar v\in Par_{\Sigma}\}$, we obtain a lifted family of forms $\{\overline{H_{\bar v}}:\bar v\in Par_{\Sigma}\}$ on $C^{ns}$. Let the branches $\{\gamma_{1},\gamma_{2}\}$ of the node $(0,0)$ of $C$ correspond to the distinct points $\{p_{1},p_{2}\}$ of $C^{ns}$. By the methods of \cite{Piero}, we may assume that $Base(\Sigma_{1})$ is disjoint from $\{p_{1},p_{2}\}$. From the definition of $\Sigma$, considered as a linear system on $C^{ns}$, we may also assume that $Base(\Sigma)$ is disjoint from $\{p_{1},p_{2}\}$. It then follows, from Definition 5.9 and Remarks 5.10 of the paper \cite{Piero}, that;\\

$I_{\gamma_{j}}(C,H)=Card(C^{ns}\cap\overline{H_{\bar v'}}\cap{\mathcal V}_{p_{j}})$, $\bar v'\in{\mathcal V}_{\bar v_{0}}$ generic in $Par_{\Sigma}$, $(**)$\\

Now define $F_{3}\subset Par_{\Sigma}\times P^{w}$ by;\\

$F_{3}(\bar v,x)$ iff $x\in (C^{ns}\cap {\overline H_{\bar v}})$\\

We have that $F_{3}\rightarrow Par_{\Sigma}$ is a finite cover and we may interpret the result $(**)$ by saying that this cover has Zariski multiplicity $d_{j}$ at $(\bar v_{0},p_{j})$, $(***)$.\\

Now let $C_{j}$ denote the irreducible curves defined by the algebraic power series $Y-\eta_{j}(X)$, for $j\in \{1,2\}$, and let $C_{12}$ be the reducible curve defined by the product $(Y-\eta_{1}(X))(Y-\eta_{2}(X))$. The curves $C_{j}$ are non-singular at $(0,0)^{lift}$, $(\dag\dag\dag)$, as one can see by calculating directly that the completions of the local rings ${L[X]^{ext}[Y]\over <Y-\eta_{j}(X)>}_{(00)}$ are in both cases equal to the formal power series ring $L[[X]]$. Let;\\

 $i_{j}:(C_{j},(00)^{lift})\rightarrow (C_{12},(00)^{lift})$, $j\in \{1,2\}$\\

 $\Psi:(C_{12},(00)^{lift})\rightarrow (C,(00))$\\

denote the inclusion morphisms and the locally etale morphism (at $(00)^{lift}$) defined respectively by the covers above. Let $W_{j}\subset C_{j}$ be the open sets defined by $(\Psi\circ i_{j})^{-1}(U_{\Phi_{\Sigma_{1}}})$. Then we obtain morphisms;\\

$\Theta_{j}=(\Phi_{\Sigma_{1}}^{-1}\circ\Psi\circ i_{j}):W_{j}\rightarrow C^{ns}$, $j\in \{1,2\}$\\

By $(\dag\dag\dag)$, the morphisms $\Theta_{j}$ extend to include the point $(0,0)^{lift}$ of $C_{j}$. We now show;\\

Claim 1. $\Theta_{j}((00)^{lift})\in \{p_{1},p_{2}\}, j\in\{1,2\}$\\

Claim 2. $\Theta_{1}((00)^{lift})\neq\Theta_{2}((00)^{lift})\ \ \ \ \ \ \ \ \ \ \ \ \  (****)$\\

Claim 3. $\Theta_{j}:(C_{j},(00)^{lift})\rightarrow (C^{ns},p_{j})$ is etale at $(00)^{lift}, j\in \{1,2\}$\\

\noindent Proof of Claim 1. Suppose, for contradiction, that $\Theta_{1}((00)^{lift})=p_{3}\notin \{p_{1},p_{2}\}$. Choose $x\in W_{1}\cap {\mathcal V}_{(00)^{lift}}$, then, by an elementary specialisation argument, $y=(\Psi\circ i_{1})(x)\in U_{\Phi_{\Sigma_{1}}}\cap {\mathcal V}_{(00)}$ and $\Phi_{\Sigma_{1}}^{-1}(y)=\Theta_{1}(x)\in C^{ns}\cap{\mathcal V}_{p_{3}}$. By elementary properties of specialisations, we would then have that $\Gamma_{\Phi_{\Sigma_{1}}}(p_{3},(00))$ in the correspondence between $C$ and $C^{ns}$, which is a contradiction. As the same argument holds for $\Theta_{2}$, the proof is shown.\\

\noindent Proof of Claim 2. Suppose, for contradiction, that $\Theta_{1}((00)^{lift})=\Theta_{2}((00)^{lift})=p_{1}$. Let $y\in V_{\Phi_{\Sigma_{1}}}\cap {\mathcal V}_{p_{2}}$, then $\Phi_{\Sigma_{1}}(y)\in U_{\Phi_{\Sigma_{1}}}\cap {\mathcal V}_{(00)}$. By Lemma 2.7 of \cite{Newton} (Lifting Lemma for etale covers), there exists a \emph{unique} $x\in C_{12}\cap {\mathcal V}_{(00)^{lift}}$ with $\Psi(x)=\Phi_{\Sigma_{1}}(y)$, hence, there clearly exists a unique $x'\in C_{j}\cap {\mathcal V}_{(00)^{lift}}$, with $(\Psi\circ i_{j})(x')=\Phi_{\Sigma_{1}}(y)$, for either $j=1$ or $j=2$. In either case, we would then have that $\Theta_{j}(x')=y$. By an elementary specialisation argument, this implies that $y\in C^{ns}\cap {\mathcal V}_{p_{1}}$. As the infinitesimal neighborhoods $C\cap {\mathcal V}_{p_{1}}$ and $C\cap {\mathcal V}_{p_{2}}$ are disjoint, this gives the required contradiction. As the same argument holds, reversing the roles of $p_{1}$ and $p_{2}$, the proof is shown.\\

\noindent Proof of Claim 3. We may assume that $\Theta_{1}((00)^{lift})=p_{1}$ and $\Theta_{2}((00)^{lift})=p_{2}$. Let $y\in V_{\Phi_{\Sigma_{1}}}\cap {\mathcal V}_{p_{1}}$, then, by a similar argument to the previous proof, we can find a \emph{unique} $x\in C_{1}\cap {\mathcal V}_{(00)^{lift}}$ with $\Theta_{1}(x)=y$. This implies that $\Theta_{1}$ is Zariski unramified at $(00)^{lift}$. By Theorems 2.7 and 2.8 of \cite{divisors} and the fact that $C^{ns}$ is smooth, if $\Theta_{1}$ fails to be etale, then it follows that it cannot be seperable either. In this case, the restriction of $\Theta_{1}$ to $W_{1}$ would also be inseperable, and, hence, either $i_{1},\Psi$ or $\Phi_{\Sigma_{1}}$ would be inseperable. As this is not the case, the proof is shown.\\

We have, therefore, shown $(****)$. Now observe that we can lift the family of forms $\{H_{\bar v}:\bar v\in Par_{\Sigma}\}$ to a family of forms on the etale cover $i:(A^{2}_{et},(00)^{lift})\rightarrow (A^{2},(00))$, which we will denote by $\{\overline{\overline{H_{\bar v}}}:\bar v\in Par_{\Sigma}\}$. We may then rewrite the cover $F_{2,j}\rightarrow Par_{\Sigma}$, using the more suggestive notation;\\

$F_{2,j}(\bar v,x)$ iff $x\in C_{j}\cap\overline{\overline{H_{\bar v}}}$\\

Moreover, observe that, if $x\in (W_{j}\cup (00)^{lift})\cap \overline{\overline{H_{\bar v}}}$, then $\Theta_{j}(x)\in C^{ns}\cap \overline{H_{\bar v}}$. Hence, restricting the covers if necessary, we can obtain a factorisation;\\

$(F_{2,j}, ((00)^{lift},\bar v_{0}))\rightarrow (F_{3},(p_{j},\bar v_{0}))\rightarrow (Par_{\Sigma},\bar v_{0})$\\

$(x,\bar v)\mapsto (\Theta_{j}(x),\bar v)\mapsto (\bar v)$\\

Using $(****)$, Claim 3, it is easy to check that the left hand cover is Zariski unramified at $((00)^{lift},\bar v_{0})$. It, therefore, follows that the Zariski multiplicity of the covers $F_{2,j}\rightarrow Par_{\Sigma}$ and $F_{3}\rightarrow Par_{\Sigma}$, at $((00)^{lift},\bar v_{0})$ and $(p_{j},\bar v_{0})$ respectively, is the same. Now, the result of the Theorem follows from $(\dag\dag)$ and $(***)$.\\

\end{proof}

\begin{rmk}

The geometric idea behind this proof is quite straightforward. The reader should have in mind the following hierarchy of images; a line, a cross, a node and a circle. The relationship between these images is simply expressed in many Gothic churches and cathedrals, in which a large circular window is placed above a series of "Gothic" nodal arches, The Abbazia di San Galgano in Italy is a particularly good example. In the language of Christianity, it expresses a relationship between the image of the Crucifixion and the image of The Lamp of Heaven. I hope to make this clearer in a book I am currently writing, entitled "Christian Geometry".\\

In more algebraic terms, this result may be expressed, by saying that the power series $(T,\eta_{j}(T))$, found in Lemma 1.5, define parametristions of the branches $\gamma_{j}$ of the node, in the sense of \cite{Piero}. It follows, from calculations in \cite{Plucker}, that, if another parametrisation (in the sense of \cite{Piero}) of the branch $\gamma_{j}$ is given, of the form $(T,\lambda_{j}(T))$, then $\eta_{j}(T)=\lambda_{j}(T)$. We will, therefore, refer to the power series, given by Lemma 1.5, as defining \emph{the} parametrisations of the ordinary double point (or node).

\end{rmk}

We now observe the following useful corollaries of Theorem 1.7;\\

\begin{lemma}
Let $F(X,Y)=0$ define an irreducible plane algebraic curve $C$, with an ordinary double point at $(0,0)$. Let $l_{\gamma_{1}}$ and $l_{\gamma_{2}}$ be the tangent lines to the two branches, centred at $(0,0)$, as defined in \cite{Piero}, and let $\eta_{1}(X)$ and $\eta_{2}(X)$ be the power series given by Lemma 1.5. Then, the equations of $l_{\gamma_{1}}$ and $l_{\gamma_{2}}$ are given by $(Y-\eta_{1}'(0)X)=0$ and $(Y-\eta_{2}'(0)X)=0$ respectively.

\end{lemma}

\begin{proof}

By Definition 6.3 of \cite{Piero}, the tangent lines $l_{\gamma_{j}}$ are characterised uniquely by the property that;\\

$I_{\gamma_{j}}(C,l_{\gamma_{j}})\geq 2$, $(j\in \{1,2\})$ $(1)$\\

By Theorem 1.7, we have that;\\

$I_{\gamma_{j}}(C,Y-\lambda X)=ord_{T}(\eta_{j}(T)-\lambda T)$ $(2)$\\

Combining $(1)$ and $(2)$, we then obtain immediately that the tangent $l_{\gamma_{j}}$ is given by $(Y-\eta_{j}'(0)X)=0$ as required.

\end{proof}

\begin{rmk}
This result is an improvement on Lemma 1.5, as this lemma does \emph{not} specify the correspondence between the power series $\{\eta_{1}(X),\eta_{2}(X)\}$ and the branches $\{\gamma_{1},\gamma_{2}\}$.

\end{rmk}

\begin{lemma}
Let hypotheses be as in the Theorem 1.7, with the additional assumption that $H(X,Y)$ is smooth at the point of intersection $(0,0)$ and has finite intersection with $C$. Let $l_{H}$ be the tangent line to $H$ at $(0,0)$ and let $l_{\gamma_{1}}$ and $l_{\gamma_{2}}$ be the tangent lines to the branches $\{\gamma_{1},\gamma_{2}\}$ of $C$. Then;\\

$I_{(0,0)}(C,H)=2$, if $l_{H}$ is distinct from $l_{\gamma_{1}}$ and $l_{\gamma_{2}}$\\

$I_{(0,0)}(C,H)>2$, otherwise.\\

Even without the assumption that $H(X,Y)$ is smooth at the point of intersection, we always have that;\\

$I_{(0,0)}(C,H)\geq 2$\\

\end{lemma}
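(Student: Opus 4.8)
The plan is to split the total intersection number at $(0,0)$ into contributions from the two branches and then convert each contribution into an order computation via Theorem 1.7. The first step is to invoke additivity of intersection multiplicity over the branches centred at the ordinary double point: since, by Lemma 1.5, $C$ has exactly the two branches $\gamma_{1},\gamma_{2}$ at $(0,0)$, and $H$ has finite intersection with $C$, one has
\[
I_{(0,0)}(C,H)=I_{\gamma_{1}}(C,H)+I_{\gamma_{2}}(C,H).
\]
This additivity, which is the conceptual heart of the argument, belongs to the branched-intersection formalism of \cite{Piero} and \cite{divisors}. Theorem 1.7 then identifies each summand as $I_{\gamma_{j}}(C,H)=ord_{T}H(T,\eta_{j}(T))$, so that the whole problem is reduced to estimating these two orders.

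Next I would dispose of the unconditional lower bound. Since $\eta_{j}(0)=0$ and $H$ passes through $(0,0)$, we have $H(0,\eta_{j}(0))=H(0,0)=0$, whence $ord_{T}H(T,\eta_{j}(T))\geq 1$ for $j=1,2$; summing gives $I_{(0,0)}(C,H)\geq 2$ with no smoothness hypothesis (finite intersection alone is needed to keep the orders finite). This settles the final assertion of the lemma.

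For the sharp statement under smoothness, write $H=H_{1}+H_{2}+\cdots$ with nonzero linear part $H_{1}=\alpha X+\beta Y$, so that $l_{H}$ is the line $\alpha X+\beta Y=0$. Substituting the parametrisation $\eta_{j}(T)=\eta_{j}'(0)T+O(T^{2})$ of Lemma 1.5 gives
\[
H(T,\eta_{j}(T))=(\alpha+\beta\eta_{j}'(0))\,T+O(T^{2}),
\]
so that $ord_{T}H(T,\eta_{j}(T))=1$ precisely when $\alpha+\beta\eta_{j}'(0)\neq 0$. By Lemma 1.9 the branch tangent is $l_{\gamma_{j}}:Y-\eta_{j}'(0)X=0$, and $l_{H}=l_{\gamma_{j}}$ exactly when $\alpha+\beta\eta_{j}'(0)=0$. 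Hence $I_{\gamma_{j}}(C,H)=1$ iff $l_{H}\neq l_{\gamma_{j}}$, while $I_{\gamma_{j}}(C,H)\geq 2$ iff $l_{H}=l_{\gamma_{j}}$. Because $\eta_{1}'(0)\neq\eta_{2}'(0)$ (Lemma 1.5), the tangents $l_{\gamma_{1}},l_{\gamma_{2}}$ are distinct, so $l_{H}$ can coincide with at most one of them. Summing the two branch contributions then yields $I_{(0,0)}(C,H)=2$ when $l_{H}$ differs from both tangents, and $I_{(0,0)}(C,H)\geq 3>2$ when $l_{H}$ equals one of them, as claimed.

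The main obstacle is the very first step: justifying the additive decomposition $I_{(0,0)}(C,H)=I_{\gamma_{1}}(C,H)+I_{\gamma_{2}}(C,H)$ rigorously inside the branched-multiplicity framework, since everything afterwards is the elementary order estimate above. I would expect to lean on the definitions and results of \cite{Piero} and \cite{divisors} to secure it, and, if necessary, to re-derive the decomposition from the \'etale-cover description of the branches used in the proof of Theorem 1.7, where the factorisation $G(X,Y)=(Y-\eta_{1}(X))(Y-\eta_{2}(X))$ already exhibits the two branches separately and makes the splitting of $ord_{T}H$ transparent.
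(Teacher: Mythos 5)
Your proposal is correct and follows essentially the same route as the paper: additivity $I_{(0,0)}(C,H)=I_{\gamma_{1}}(C,H)+I_{\gamma_{2}}(C,H)$ via the branched Bezout theorem of \cite{Piero} and \cite{Newton}, reduction to $ord_{T}H(T,\eta_{j}(T))$ via Theorem 1.7, and the first-order computation identifying when that order exceeds $1$ with $l_{H}=l_{\gamma_{j}}$ via Lemma 1.9 (the paper phrases this as the chain rule $H_{X}|_{(0,0)}+H_{Y}|_{(0,0)}\eta_{j}'(0)=0$, which is the same calculation as your explicit power-series substitution). The additivity step you flag as the main obstacle is handled in the paper exactly as you propose, by citation to Theorem 5.13 of \cite{Piero} and Lemma 4.16 of \cite{Newton}.
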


\begin{proof}

By the main result of \cite{Newton}, Lemma 4.16, and Theorem 5.13 of \cite{Piero}, Branched Version of Bezout's Theorem, we have that;\\

$I_{(0,0)}(C,H)=I_{\gamma_{1}}(C,H)+I_{\gamma_{2}}(C,H)$ $(1)$\\

By Theorem 1.7, we have that;\\

$I_{\gamma_{j}}(C,H)=ord_{T}H(T,\eta_{j}(T))$, $j\in\{1,2\}$ $(2)$\\

where $\{\eta_{1}(T),\eta_{2}(T)\}$ are given by Lemma 1.5. By an application of the chain rule for differentiating algebraic power series, see the proof of Lemma 2.10 in \cite{Piero}, and the previous Lemma 1.9, we have that;\\

$ord_{T}H(T,\eta_{j}(T))>1$ iff $H_{X}|_{(0,0)}+H_{Y}|_{(0,0)}\eta_{j}'(0)=0$\\

\indent \ \ \ \ \ \ \ \ \ \ \ \ \ \ \ \ \ \ \ \ \ \ \ \ \ \ \ \ iff $dH_{(0,0)}\centerdot l_{\gamma_{j}}=0$. $(3)$\\

Now, the first part of the result follows immediately by combining $(1),(2)$ and $(3)$. The final part is clear, just using $(1)$.

\end{proof}

\begin{rmk}
It seems difficult to establish this type of result by purely algebraic methods, except in the simplest cases. In general, one would have to show that, for  polynomials of the form $F(X,Y)=(aX+bY)(cX+dY)+F_{1}(X,Y)$ and $H(X,Y)=(eX+fY)+H_{1}(X,Y)$, with $F_{1}$ and $H_{1}$ having first term in their homogeneous expansion of orders at least $3$ and $2$ respectively, and $\{l_{ab},l_{cd}\}$ distinct, that;\\

$length({L[X,Y]\over <F(X,Y),H(X,Y)>})_{(0,0)}>2$ iff $l_{ef}\notin \{l_{ab},l_{cd}\}$\\

I would be very interested to know how this can be done.

\end{rmk}

We finish this section by proving the following useful result concerning the effect of translations on ordinary double point (or nodes).\\

\begin{theorem}
Let $F(X,Y)=0$ define an irreducible algebraic curve $C$, with an ordinary double point at $(0,0)$. Let $l_{\gamma_{1}}$ and $l_{\gamma_{2}}$ be the tangent lines to the branches $\{\gamma_{1},\gamma_{2}\}$, centred at $(0,0)$, given in affine coordinates by $aX+bY=0$ and $cX+dY=0$. Let $(u,v)$ be \emph{any} choice of non-zero vector, with the property that the line $l$ defined by $vX-uY$ is \emph{distinct} from $l_{\gamma_{1}}$ and $l_{\gamma_{2}}$. Let $\{C_{t}\}_{t\in A^{1}}$ be the family of irreducible curves defined by;\\

$F_{t}(X,Y)=F(X-tu,Y-tv)=0$\indent  $(t\in A^{1})$\\

Then, for generic $t\in A^{1}\cap{\mathcal V_{0}}$, there exist exactly two points $\{q_{1},q_{2}\}=C\cap C_{t}\cap{\mathcal V}_{(0,0)}$. Moreover, in the terminology of \cite{Piero}, these points lie on the branches $\{\gamma_{1},\gamma_{2}\}$ respectively. Finally, the intersections are transverse.

\end{theorem}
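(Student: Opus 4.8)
The plan is to use the explicit power-series parametrisations $(T,\eta_1(T))$ and $(T,\eta_2(T))$ of the two branches, as furnished by Lemma 1.5 and Theorem 1.7, and to analyse the intersection $C \cap C_t$ near the origin branch-by-branch. The curve $C_t$ is just $C$ translated by $(tu,tv)$, so its branches at the \emph{translated} node $(tu,tv)$ are parametrised by $(T+tu,\ \eta_j(T)+tv)$ for $j\in\{1,2\}$. The set $C\cap C_t\cap \mathcal{V}_{(0,0)}$ consists of those infinitesimal points lying on a branch $\gamma_i$ of $C$ and simultaneously on a (translated) branch of $C_t$; the strategy is to solve, for each ordered pair $(i,j)$, the system expressing that a point on the $i$-th branch of $C$ agrees with a point on the $j$-th branch of $C_t$, and to count the infinitesimal solutions.

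First I would set up the branch-matching equations. A point near $(0,0)$ on $\gamma_i$ of $C$ has the form $(s,\eta_i(s))$, while a point on the $j$-th branch of $C_t$ has the form $(r+tu,\eta_j(r)+tv)$. Equating $X$- and $Y$-coordinates gives $s = r+tu$ and $\eta_i(s)=\eta_j(r)+tv$; substituting yields the single equation $\eta_i(r+tu)-\eta_j(r)=tv$ in the unknown $r$ (with $t$ an infinitesimal parameter). Expanding to first order and using $\eta_i(0)=\eta_j(0)=0$, the leading behaviour is governed by $\eta_i'(0)$ and $\eta_j'(0)$, which by Lemma 1.9 are exactly the tangent slopes $a/b$-type data of $l_{\gamma_1}$ and $l_{\gamma_2}$. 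The key point is that when $i=j$ the equation degenerates (the two branches are \emph{the same} branch merely shifted along its own tangent-plus-higher-order direction), whereas the hypothesis that $l$, the line $vX-uY=0$, is distinct from both $l_{\gamma_1}$ and $l_{\gamma_2}$ forces $v/u \neq \eta_i'(0)$, guaranteeing a nondegenerate leading term and hence a \emph{unique} infinitesimal solution $r=r(t)$ for each pair. I would argue that the genuine matchings are the two ``diagonal'' ones $i=j$, producing the two points $q_1\in\gamma_1$ and $q_2\in\gamma_2$, and that the off-diagonal pairs either coincide with these or fail to produce additional infinitesimal intersection points for generic $t$ because the two distinct tangent directions separate the branches.

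Next I would verify the three assertions. For the count and the branch-membership claim, the solvability analysis above yields one point on each branch; here I would invoke Theorem 1.7 and the branched intersection-multiplicity formalism of \cite{Piero} to make rigorous the identification of each $q_i$ with its branch $\gamma_i$ via the parametrisations. For transversality, I would compute the tangent directions of $C$ and $C_t$ at $q_i$: since $C_t$ is a translate of $C$, its branch through $q_i$ has tangent slope governed by $\eta_i'$ evaluated at the corresponding parameter, which specialises as $t\to 0$ to $\eta_i'(0)$, the tangent of $\gamma_i$; the point is that at the \emph{actual} intersection point $q_i$ the two branches meet with distinct tangents for generic $t$, giving $I_{q_i}(C,C_t)=1$ by Lemma 1.11 (or its transverse-case analogue). \textbf{The main obstacle} I anticipate is the transversality claim: one must show that although the branch of $C$ and the branch of $C_t$ through $q_i$ are ``parallel copies'' with the same limiting tangent $\eta_i'(0)$ as $t\to 0$, at the genuine meeting point $q_i(t)$ for $t\neq 0$ infinitesimal the tangents are nonetheless distinct, i.e.\ the second-order term $\eta_i''(0)$ (together with the displacement $(u,v)$) produces a nonzero angle; this requires a careful second-order expansion and the nondegeneracy condition $v/u\neq\eta_i'(0)$ rather than merely a first-order argument. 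Controlling this angle uniformly for generic infinitesimal $t$, and ruling out spurious off-diagonal intersections, is where the real work lies.
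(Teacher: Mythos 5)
Your setup---parametrising the branches of $C$ by $(s,\eta_i(s))$ and those of $C_t$ by $(r+tu,\eta_j(r)+tv)$, then solving $\eta_i(r+tu)-\eta_j(r)=tv$ for each pair $(i,j)$---is essentially the right frame, and matches the paper's strategy of lifting the factorisation $F=(Y-\eta_1(X))(Y-\eta_2(X))U(X,Y)$ to an etale cover and comparing components. But your case analysis is exactly inverted, and this is a genuine error, not a presentational one. For $i=j$ the equation reads $\eta_i(r+tu)-\eta_i(r)=tv$; for infinitesimal $r$ the left side is $\eta_i'(r)tu+O(t^2)\approx \eta_i'(0)tu$, so an infinitesimal solution would force $v/u=\eta_i'(0)$, which is precisely what the hypothesis on $l$ excludes. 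The diagonal matchings therefore contribute \emph{nothing}: a branch and its own generic translate are disjoint near the origin (in the paper's normalised coordinates the defining equations $Y-\eta_i(X)$ and $Y-t-\eta_i(X)$ differ by the nonzero constant $t$, so the corresponding curves on the cover are disjoint outright---this is Claim 2 of the paper's proof, and is the same phenomenon as Remark 1.14 about smooth points). The two genuine intersection points come from the \emph{off-diagonal} pairs $i\neq j$: there the equation has leading form $(\eta_i'(0)-\eta_j'(0))r=t(v-\eta_i'(0)u)+\ldots$, and it is the ordinary-double-point condition $\eta_1'(0)\neq\eta_2'(0)$ (not the hypothesis on $l$) that yields a unique infinitesimal solution $r(t)$; the paper carries this out in Claim 3 by writing $\eta_1(X)-\eta_2(X)=XU(X)$ with $U$ a unit and inverting. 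Thus the point on $\gamma_1$ of $C$ lies on the translate of $\gamma_2$, and vice versa.

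This inversion also undermines your transversality argument. You flag transversality as the main obstacle because you believe the two local branches meeting at $q_i$ are a branch and a parallel copy of \emph{itself}, both with limiting tangent $\eta_i'(0)$, so that distinctness of tangents must be extracted from second-order data such as $\eta_i''(0)$. With the correct matching, the branch of $C$ through $q_1$ has tangent specialising to $l_{\gamma_1}$ while the branch of $C_t$ through $q_1$ has tangent specialising to a translate of $l_{\gamma_2}$; these are distinct by the definition of an ordinary double point, and transversality follows from a first-order specialisation argument on the gradient function (the paper's steps $(\dag\dag)$ through $(\dag\dag\dag\dag)$ using \cite{Plucker}), with no second-order expansion needed. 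As written, your argument would locate points where none exist, miss the two that do, and then attempt a delicate transversality computation that the correct identification renders unnecessary.
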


\begin{rmk}

This property is a peculiar feature of nodal curves. For a general curve, with a smooth point at $(0,0)$, one would not expect to find \emph{any} such points of intersection, as is easily seen by direct calculation, in the simplest case of lines. The reader is strongly encouraged, by drawing a picture of a node, to see why, intuitively, the result should be true in this case. Our proof follows this intuitive idea. It seems clear geometrically that the result also holds for any deformation of a nodal curve $C$, which preserves the nodes, and for which the given node is not a base point of the deformation. Such deformations were studied extensively by Severi in \cite{Sev}.

\end{rmk}

\begin{proof}(Theorem 1.13)\\

By making a linear change of coordinates, we may assume that the line $l$ corresponds to the $Y$-axis, which is distinct from the tangent lines to the ordinary double point $(0,0)$. By Lemmas 1.4 and 1.5, we can find a factorisation;\\

$F(X,Y)=(Y-\eta_{1}(X))(Y-\eta_{2}(X))U(X,Y)$\\

as a formal identity in $L[[X,Y]]$, with $\eta_{1}(X)$ and $\eta_{2}(X)$ defining the parametrisations of the ordinary double point and $U(X,Y)$ a unit in $L[[X,Y]]$. We then have a corresponding factorisation of the translated curve;\\

$F_{t}(X,Y)=F(X,Y-t)=(Y-t-\eta_{1}(X))(Y-t-\eta_{2}(X))U(X,Y-t)$\\
\indent \ \ \ \ \ \ \ \ \ \ \ \ \ \ \ \ \ \ \ \ \ \ \ \ \ \ \ \ \ \ \ \ \ \ \ \ \ \ \ \ \ \ \ \ \ \ \ \ \ \ \ \ \ \ \ \ \ \ \ \ \ \ \ \ \ \ \ \ \ \ \ \ \ \ \ \ \ \ \ \ \ \  $(*)$\\

By the remarks at the beginning of Section 3 of \cite{Newton}, we can find an etale cover $i:(U,(00)^{lift})\rightarrow (A^{2},(00))$, such that the algebraic power series $\{\eta_{1}(X),\eta_{2}(X),U(X,Y)\}$ belong to the coordinate ring $R(U)$. Without loss of generality, we can assume that $U$ is irreducible. As $(Y-\eta_{1}(X))$ and $(Y-\eta_{2}(X))$ both vanish at $(00)^{lift}$, and are clearly irreducible in the power series ring $L[[X,Y]]$, they define irreducible algebraic curves $C_{1}$ and $C_{2}$ passing through $(00)^{lift}$. We can consider $Y-\eta_{j}(X)$ as defining a morphism from the algebraic variety $U$ to $A^{1}$. As $U$ is irreducible and $Y-\eta_{j}(X)$ is not identically zero in $R(U)$, the image of this morphism consists of an open subset $V_{j}\subset A^{1}$ containing $0$. By elementary dimension considerations, for $t\in V_{j}$, the corresponding fibre $(Y-t-\eta_{j}(X))=0$ defines an irreducible algebraic curve $C_{j,t}$ in $U$. We let $V=V_{1}\cap V_{2}$. It follows immediately that, for $t\in V$, the function $U(X,Y-t)$ also belongs to the fraction field $Frac(R(U))$. Moreover, if $S=\{t_{1},\ldots,t_{r}\}$ denotes the finitely many elements of $A^{1}$ for which the $Y$-axis intersects the algebraic curve $C$, then, a straightforward calculation, using $(*)$, shows that, for $t\in (V\setminus S)$, $(Y-t-\eta_{j}(X))|_{(00)^{lift}}\neq 0$ and, hence, $U(X,Y-t)|_{(00)^{lift}}\neq \{0,\infty\}$, $(\dag)$, in particular $U(X,Y-t)$ defines a unit in $L[[X,Y]]$. We let $V'=(V\setminus S)\cup \{0\}$. For $t\in V'$, let $R_{t}:=(U(X,Y-t)=\infty)$ be the infinite locus of $U(X,Y-t)$. By $(*)$ and the fact that the $G_{t}(X,Y)$ is finite on the affine plane $A^{2}$,  $R_{t}\subset C_{1,t}\cup C_{2,t}$. If $R_{t}$ is non-empty, by elementary dimension considerations, $R_{t}$ would contain at least one of the components $C_{j,t}$. Hence, $(00)^{lift}\in R_{t}$, which contradicts $(\dag)$. This shows that $R_{t}=\emptyset$ and $U(X,Y-t)$ belongs to $R(U)$ for $t\in V'$. The above calculation shows that the liftings $C_{t}^{lift}$ of the irreducible translated curves $C_{t}$ to $U$, for $t\in V'$, have the following decomposition;\\

$C_{t}^{lift}=C_{1,t}\cup C_{2,t}\cup W_{t}$ $(**)$\\

where $C_{1,t}$ and $C_{2,t}$ are the irreducible curves defined above, and $W_{t}$ is a (possibly empty) union of irreducible curves defined by $U(X,Y-t)=0$, disjoint from $(00)^{lift}$. We now show;\\

Claim 1. For $t\in V'\cap{\mathcal V}_{0}$, $q\in C\cap C_{t}\cap{\mathcal V}_{(0,0)}$ iff there exists a \emph{unique}\\
\indent \ \ \ \ \ \ \ \ \ \ \ \ \ $q^{lift}\in C^{lift}\cap C_{t}^{lift}\cap {\mathcal V}_{(0,0)^{lift}}$, with $i(q^{lift})=q$.\\

By Theorem 6.3 of \cite{deP} or even Lemma 2.7 of \cite{Newton}, the finite cover $(U/A^{2})$ (possibly localised) is Zariski unramified at $((00),(00)^{lift})$. Hence, if $q\in A^{2}\cap {\mathcal V}_{(0,0)}$, there exists a \emph{unique} $q^{lift}\in U\cap {\mathcal V}_{(0,0)^{lift}}$ with $i(q^{lift})=q$, $(***)$. In particular, if $q\in C\cap C_{t}\cap{\mathcal V}_{(0,0)}$, $q^{lift}$ is given by $(***)$, and as, by definition of $C^{lift}$ and $C_{t}^{lift}$, $q^{lift}\in C^{lift}\cap C_{t}^{lift}$, we have shown one direction of the claim. The other direction follows easily from definitions and the fact that, if $q^{lift}\in U\cap {\mathcal V}_{(0,0)^{lift}}$, then $i(q^{lift})\in A^{2}\cap{\mathcal V}_{(0,0)}$.\\

Claim 2. For $t\in (V'\setminus\{0\})\cap{\mathcal V}_{0}$;\\

$C^{lift}\cap C_{t}^{lift}\cap {\mathcal V}_{(0,0)^{lift}}=(C_{1}\cap C_{2}^{t}\cap {\mathcal V}_{(0,0)^{lift}})\cup (C_{2}\cap C_{1}^{t}\cap {\mathcal V}_{(0,0)^{lift}})$\\

We use the decomposition given in $(**)$. Suppose that $q^{lift}\in C^{lift}\cap C_{t}^{lift}\cap {\mathcal V}_{(0,0)^{lift}}$. First, we show that $q^{lift}$ cannot belong to $W_{0}$ or $W_{t}$. For, if either $W_{0}(q^{lift})$ or $W_{t}(q^{lift})$ holds, then, by specialisation, $W_{0}((00)^{lift})$ holds as well. This contradicts the fact that $W_{0}$ is disjoint from $(00)^{lift}$. The reader should compare the proof of Lemma 4.15 (Unit Removal) in \cite{Newton}, where a similar argument was used. Secondly, we show that $q^{lift}$ cannot belong to $C_{1}\cap C_{1,t}$ or $C_{2}\cap C_{2,t}$. This follows from an easy algebraic calculation. Namely, we would have that either the pair of functions $\{Y-\eta_{1}(X),Y-t-\eta_{1}(X)\}$ vanished at $q^{lift}$, or the pair of functions $\{Y-\eta_{2}(X),Y-t-\eta_{2}(X)\}$ vanished at $q^{lift}$. In either case, this implies the constant $t$ vanishes at $q^{lift}$, contradicting the assumption that $t\neq 0$. This shows the claim.\\

Claim 3. For $t\in (V'\setminus\{0\})\cap{\mathcal V}_{0}$;\\

 There exists a \emph{unique} $q_{1}^{lift}\in C_{1}\cap C_{2}^{t}\cap{\mathcal V}_{(0,0)^{lift}}$ and a \emph{unique}\\
  \indent $q_{2}^{lift}\in C_{2}\cap C_{1}^{t}\cap{\mathcal V}_{(0,0)^{lift}}.$\\

We show the first part of the claim, the proof of the second part is the same. The proof follows the methods of Section 2 in \cite{divisors}, which the reader is recommended to revise. We denote the coordinate ring of $V'$ by $L[t]_{h}$, for a polynomial $h(t)$ vanishing exactly at $(A^{1}\setminus V')$  We have the map;\\

$L[t]_{h}\rightarrow {L[X]^{ext}[Y][t]_{h}\over <Y-\eta_{1}(X),Y-t-\eta_{2}(X)>}$\\

which corresponds to a finite cover;\\

$F\rightarrow V'$, where $F\subset V'\times U$ is defined by $F(t,x)$ iff $x\in C_{1}\cap C_{2}^{t}$. We compute the Zariski multiplicity of the cover $F\rightarrow V'$ at $(0,(00)^{lift})$, $(\dag)$. First, observe that by Lemma 1.5, we have that;\\

$\eta_{1}(X)-\eta_{2}(X)=XU(X)$, for a unit $U(X)\in L[[X]]\cap L(X)^{alg}$\\

Hence, without loss of generality, it is sufficient to compute the Zariski multiplicity at $(0,0^{lift})$ of the cover $\phi$ determined by;\\

$L[t]\rightarrow {L[X]^{ext}[t]\over <XU(X)-t>}$ $(\dag\dag)$\\

By the inverse function theorem, or explicit calculation using the method of determining coefficients, we can find an algebraic power series $c(t)\in L[[t]]\cap L(t)^{alg}$, with $c(0)=0$ and $c'(0)\neq 0$, such that $c(t)U(c(t))=t$. We then have;\\

$XU(X)-t=XU(X)-c(t)U(c(t))$\\

\indent \ \ \ \ \ \ \ \ \ \ \ \ \ \ \ \ $=(X-c(t))U(X)+c(t)(U(X)-U(c(t)))$\\

\indent \ \ \ \ \ \ \ \ \ \ \ \ \ \ \ \ $=(X-c(t))(U(X)+c(t)V(X,t))$\\

\indent \ \ \ \ \ \ \ \ \ \ \ \ \ \ \ \ $=(X-c(t))W(X,t)$ for a unit $W(X,t)\in L[[X,t]]\cap L(X,t)^{alg}$\\

where, in the last step, we used the fact that $U(0)\neq 0$ and $c(0)=0$. We can now show directly that the cover $\phi$ determined by $(\dag\dag)$ is etale at $(0,0^{lift})$. This follows by observing that the map on formal power series;\\

${L[[X,t]]\over <(X-c(t))W(X,t)>}\rightarrow L[[t]]$, $f(X,t)\mapsto f(c(t),t)$\\

is an isomorphism, and applying the local criteria for etale morphisms, given in \cite{Mum} (p 179) (in this case $\phi$ induces an isomorphism on the formal power series ring $L[[t]]$ given by $\phi^{*}:t\mapsto t$) Then, one can use Theorems 2.7 and 2.8 of \cite{divisors}, to deduce that the cover $\phi$ determined by $(\dag\dag)$ is Zariski unramified at $(0,0^{lift})$. Hence, the claim follows.\\

We now complete the proof of the Theorem. By combining Claims 1,2 and 3, for generic $t\in A^{1}\cap{\mathcal V}_{0}$, (even more generally for $t\in {V'\setminus\{0\}}\cap{\mathcal V}_{0}$), the intersection $C\cap C_{t}\cap{\mathcal V}_{(00)}$ consists of at most two points $\{q_{1},q_{2}\}=\{i(q_{1}^{lift}),i(q_{2}^{lift})\}$, where;\\

 $q_{1}^{lift}=C_{1}\cap C_{2}^{t}\cap{\mathcal V}_{(0,0)^{lift}}$, $q_{2}^{lift}=C_{2}\cap C_{1}^{t}\cap{\mathcal V}_{(0,0)^{lift}}$\\

It is straightforward to see that $q_{1}^{lift}$ and $q_{2}^{lift}$ are distinct. If not, $q_{1}^{lift}=q_{2}^{lift}$ belongs to $C_{1}\cap C_{2}\cap {\mathcal V}_{(0,0)^{lift}}$, hence $q_{1}^{lift}=q_{2}^{lift}=(00)^{lift}$. This contradicts the fact we observed earlier, that $(00)^{lift}$ does not belong to $C_{1}^{t}$ or $C_{2}^{t}$, for $t\in {V'\setminus\{0\}}$. It follows that $q_{1}$ and $q_{2}$ are also distinct. If not, we would have that $Card(U\cap{\mathcal V}_{(00)^{lift}}\cap i^{-1}(q_{1}))=2$, contradicting the fact that the cover $(U/A^{2})$ is Zariski unramified at $((00),(00)^{lift})$. Hence, the intersection $C\cap C_{t}\cap{\mathcal V}_{(00)}$ consists of exactly two points $\{q_{1},q_{2}\}$, as required. In order to show that these points belong to the branches $\{\gamma_{1},\gamma_{2}\}$, we use the method of Theorem 1.7. Using the notation there, it is sufficient to check that the points $\{q_{1},q_{2}\}$ belong to the open sets $\{W_{1},W_{2}\}$ respectively and that the images $\{\Theta_{1}(q_{1}),\Theta_{2}(q_{2})\}$ belong to the infinitesimal neighborhoods $\{C^{ns}\cap{\mathcal V}_{p_{1}},C^{ns}\cap {\mathcal V}_{p_{2}}\}$ respectively. This is a straightforward exercise which we leave to the reader.\\

 Finally, we show the transversality result. It is clear that both the intersections $\{q_{1},q_{2}\}$ define nonsingular points of both $C$ and its translation $C_{t}$. It is, therefore, sufficient to show that the pairs of tangent lines $\{l_{q_{1},C},l_{q_{1},C_{t}}\}$ and $\{l_{q_{2},C},l_{q_{2},C_{t}}\}$ are distinct, $(\dag)$. The proofs of the remaining parts of the theorem show that the points $\{q_{1},q_{2}\}$ also lie on the translated branches $\{\gamma_{2}^{t},\gamma_{1}^{t}\}$ \emph{respectively} of $C_{t}$. It follows that we can find a pair $\{p_{2},p_{1}\}$, lying on the branches $\{\gamma_{2},\gamma_{1}\}$ of $C$ respectively, such that $l_{p_{2},C}$ is parallel to $l_{q_{2},C^{t}}$ and $l_{p_{1},C}$ is parallel to $l_{q_{1},C_{t}}$. In order to show $(\dag)$, it is, therefore, sufficient to prove that both the pairs $\{grad(l_{p_{2}}),grad(l_{q_{1}})\}$ and $\{grad(l_{p_{1}}),grad(l_{q_{2}})\}$ are distinct, $(\dag\dag)$. In order to show $(\dag\dag)$, we require the methods of \cite{Plucker}. We recall the definition of the gradient function, $grad$, see the remarks before Lemma 3.9 of \cite{Plucker}, given in the coordinate system $(X,Y)$ by;\\

$grad=-{F_{X}\over F_{Y}}$\\

It follows easily from the explanation in \cite{Plucker}, see specifically the power series calculation given immediately before Lemma 3.9 of \cite{Plucker}, that $grad$ defines a rational function on $C$ with the following property;\\

If $U\subset C$ denotes the open subset of nonsingular points of $C$ in finite position, whose tangent directions are \emph{not} parallel to the $y$-axis, then, for $x\in U$, $grad(x)$ is equal to the gradient of the tangent line $l_{x}$ in the coordinate system $(X,Y)$.\\

We may, without loss of generality, assume that the pairs $\{p_{2},q_{1}\}$ and $\{p_{1},q_{2}\}$ belongs to $U$. Hence, it is sufficient to show that $grad(p_{2})\neq grad(q_{1})$ and $grad(p_{1})\neq grad(q_{2})$, $(\dag\dag\dag)$. In order to show this last claim, fix a nonsingular model $C^{ns}$ of $C$, with birational morphism $\Phi:C^{ns}\leftrightsquigarrow C$, such that the branches $\{\gamma_{1},\gamma_{2}\}$ of the node centred at $(00)$ of $C$, correspond to infinitesimal neighborhoods $\{\mathcal V_{O_{1}},\mathcal V_{O_{2}}\}$  of $\{O_{1},O_{2}\}\subset C^{ns}$ in the fibre $\{O_{1},O_{2}\}=\Gamma_{[\Phi]}(y,(00))$, see Section 5 of \cite{Piero}. The function $grad$ lifts to a rational function $grad^{lift}=grad\circ\Phi$ on $C^{ns}$. Using the fact that $C^{ns}$ is nonsingular, it extends uniquely to a morphism $grad^{lift}:C^{ns}\rightarrow P^{1}$. We claim that $grad^{lift}(O_{1})$ defines the gradient of the tangent line $l_{\gamma_{1}}$ of the node, centred at $(00)$ of $C$, with a corresponding statement for $grad^{lift}(O_{2})$, $(\dag\dag\dag\dag)$. In order to see this, use Lemma 2.2 of \cite{Plucker}, to show that one can unambigiously assign a value $val_{\gamma_{1}}(grad)$ at the branch $\gamma_{1}$ of $C$. By the construction of $val_{\gamma}$, given before Lemma 2.1 of \cite{Plucker}, and the power series calculation, given before Lemma 3.9 of \cite{Plucker}, $val_{\gamma_{1}}(grad)$ gives the gradient of the tangent line $l_{\gamma_{1}}$. By Lemma 2.3 of \cite{Plucker}, which shows that $val_{\gamma}$ is birationally invariant, $val_{\gamma_{1}}(grad)=grad^{lift}(O_{1})$. Hence, $(\dag\dag\dag\dag)$ is shown. By the definition of a node, we obtain immediately that $grad^{lift}(O_{1})\neq grad^{lift}(O_{2})$. Now, using the fact that the pair $\{p_{2},q_{1}\}$ corresponds to points $\{p_{2}^{lift},q_{1}^{lift}\}$ in the infinitesimal neighborhoods $\{{\mathcal V}_{O_{2}},{\mathcal V}_{O_{1}}\}$ of $C^{ns}$ respectively, the result $(\dag\dag\dag)$ follows immediately from elementary properties of specialisations. This gives the result.

\end{proof}
\end{section}

\end{document}